\theoremstyle{plain} 
\newtheorem{theorem}{Theorem}[section]
\newtheorem{proposition}[theorem]{Proposition}
\theoremstyle{definition}
\theoremstyle{remark}
\definecolor{shadecolor}{rgb}{1,0.8,0.3}
\newcommand{\DETAIL}[1]{}
\newcommand{\IGNORE}[1]{}
\newcommand{\C}{{\mathscr{C}}}
\newcommand{\D}{{\mathscr{D}}}
\newcommand{\F}{{\mathbf{F}}}
\renewcommand{\L}{{\mathscr{L}}}
\newcommand{\M}{{\mathscr{M}}}
\newcommand{\R}{{\mathbb{R}}}
\renewcommand{\S}{{\mathcal{S}}}
\newcommand{\T}{{\mathrm{T}}}
\newcommand{\BF}{{\boldsymbol{f}}}
\newcommand{\BG}{{\boldsymbol{g}}}
\newcommand{\BH}{{\boldsymbol{h}}}
\newcommand{\BU}{{\boldsymbol{u}}}
\newcommand{\GS}{\geqslant}
\newcommand{\ID}{{\mathrm{id}}}
\newcommand{\LS}{\leqslant}
\newcommand{\MM}{{\mathsf{m}}}
\newcommand{\MU}{{\mathbf{M}}}
\newcommand{\ETA}{{\mathbf{H}}}
\newcommand{\LEB}{{\mathcal{L}}}
\newcommand{\MON}{{\mathrm{MON}}}
\newcommand{\RHO}{\varrho}
\newcommand{\SUB}[1]{
  _{\raisebox{0.25ex}{\scriptsize{$#1$}}}}
\newcommand{\SYM}{{\mathrm{sym}}}
\newcommand{\COPT}{{\mathscr{K}}}
\DeclareMathOperator*{\COF}{{\mathrm{cof}}}
\DeclareMathOperator*{\TRACE}{{\mathrm{tr}}}
\numberwithin{equation}{section}
\title 
  [Polar Cone]
  {The Polar Cone of the Set of Monotone Maps} 
\author
  {Fabio Cavalletti}
\address
  {Fabio Cavalletti,
   Lehrstuhl f\"{u}r Mathematik (Analysis),
   RWTH Aachen University,
   Templergraben 55,
   D-52062 Aachen, 
   Germany}
\email
  {cavalletti@instmath.rwth-aachen.de}
\author
  {Michael Westdickenberg}
\address
  {Michael Westdickenberg,
   Lehrstuhl f\"{u}r Mathematik (Analysis),
   RWTH Aachen University,
   Templergraben 55,
   D-52062 Aachen, 
   Germany}
\email
  {mwest@instmath.rwth-aachen.de}
\date{\today}
\subjclass[2010]{49Q20}
\keywords{Optimal Transport, Polar Cone}
\begin{document}

\begin{abstract} 
We prove that every element of the polar cone to the closed convex cone of
monotone transport maps can be represented as the divergence of a measure field
taking values in the positive definite matrices.
\end{abstract}

    \maketitle \tableofcontents


\section{Introduction}
\label{S:I}

The one-dimensional pressureless gas dynamics equations
\begin{equation}
\label{E:PGD}
  \left.\begin{array}{r}
    \partial_t\RHO
      +\nabla\cdot(\RHO\BU) = 0
\\[0.5em]
    \partial_t(\RHO\BU)
      + \nabla\cdot(\RHO\BU\otimes\BU) = 0
  \end{array}\right\}
  \quad\text{in $[0,\infty)\times\R$}
\end{equation}
has recently been shown equivalent (in the regime of sticky particles) to a
first-order differential inclusion on the space of monotone transport maps from
the reference measure space $([0,1],\LEB^1|_{[0,1]|}) =: (\Omega,\MM)$ (where
$\LEB^1$ is the one-dimensional Lebesgue measure) to $\R$; see
\cite{NatileSavare}. More precisely, to every density/velocity $(\RHO,\BU)$
solving \eqref{E:PGD} one can associate a unique map $X \in \L^2(\Omega,\MM)$
with $X$ {\em monotone} such that
\begin{equation}
\label{E:COMP}
  \RHO(t,\cdot) = X(t,\cdot) \# \MM
  \quad\text{for all $t\in[0,\infty)$.}
\end{equation}
Here $\#$ indicates the push-forward of measures. Then $X$ satisfies
\begin{equation}
\label{E:FODI}
  \dot{X} + \partial I_\COPT(X) \ni \bar{V}
  \quad\text{for all $t\in[0,\infty)$,}
\end{equation}
where $\COPT$ denotes the closed convex cone of all transport maps $X \in
\L^2(\Omega,\MM)$ that are monotone, and where $\partial I_\COPT$ is the
subdifferential of the indicator function of $\COPT$. If $X$ satisfies
\eqref{E:FODI} and is related to $\RHO$ through \eqref{E:COMP}, then the
Eulerian velocity $\BU$ can be recovered from the Lagrangian velocity $V :=
\dot{X}$ through
\begin{equation}
\label{E:VELOS}
  V(t,\cdot) = \BU(t,X(t,\cdot))
  \quad\text{for all $t\in[0,\infty)$.}
\end{equation}
Assuming finite kinetic energy, it is natural to require that
$$
  V(t,\cdot) \in \L^2(\Omega,\MM),
  \quad
  \BU(t,\cdot) \in \L^2(\R,\RHO(t,\cdot)).
$$
The relation \eqref{E:VELOS} in particular determines the initial Lagrangian
velocity $\bar{V}$ in \eqref{E:FODI} in terms of the initial data $(\RHO,\BU)
(0,\cdot) =: (\bar{\RHO},\bar{\BU})$ of the system \eqref{E:PGD}.

It is shown in \cite{NatileSavare} that the solution of \eqref{E:FODI} can be
written explicitly as
\begin{equation}
\label{E:PROJ}
  X(t,\cdot) = P_\COPT( \bar{X}+t\bar{V} )
  \quad\text{for all $t\in[0,\infty)$,}
\end{equation}
with $\bar{X} := X(0,\cdot) \in\COPT$ given by \eqref{E:COMP}. Here $P_\COPT$
denotes the metric projection onto the cone $\COPT$. The connection between
\eqref{E:PGD} and \eqref{E:FODI} makes it possible to apply classical results
from the theory of first-order differential inclusions in Hilbert spaces to
study the pressureless gas dynamics equations, which form a system of hyperbolic
conservation laws. We refer the reader to \cites{NatileSavare, Brezis} for
further information.

It is known that if $X$ satisfies \eqref{E:PROJ}, then the difference $(\bar{X}
+ t\bar{V}) - X(t,\cdot)$ must be an element of the {\em polar cone} $N_\COPT(
X(t,\cdot))$ of $\COPT$, which is defined as
\begin{equation}
\label{E:POCO}
  N_\COPT(X) := \Big\{ Y \in \L^2(\Omega,\MM) \colon
    \int_\Omega Y(X'-X) \LS 0
    \quad\text{for all $X'\in\COPT$} \Big\}
\end{equation}
for all $X\in\COPT$. We observe that $N_\COPT(X)$ coincides with the
subdifferential $\partial I_\COPT(X)$. Since $\COPT$ is a cone, one can choose
$X'=2X$, $X'=0$ in \eqref{E:POCO} to obtain that
\begin{equation}
\label{E:POCO2}
  Y \in N_\COPT(X)
  \quad\Longleftrightarrow\quad
  \int_\Omega Y X = 0,
  \int_\Omega Y X' \LS 0
  \quad\text{for all $X'\in\COPT$.}
\end{equation}
One is therefore naturally led to the problem of characterizing the polar cone
of the set of monotone transport maps, beyond the basic definition
\eqref{E:POCO}. It is shown in \cite{NatileSavare} that if
$Y\in\L^2(\Omega,\MM)$ is an element of the polar cone $N_\COPT(X)$, then $Y$
coincides with the {\em derivative of a nonnegative function}. We refer the
reader to \cite{NatileSavare} for more details, and to \cites{Lions,
CarlierLachandRobert, Westdickenberg} for similar results.

In this paper, we will give a generalization of this result to the
multi-dimensional case. We are interested in the following setting: We assume
that a Borel probability measure $\RHO$ on $\R^d$ is given with finite second
moments, so that $\int_{\R^d} |x|^2 \,\RHO(dx) < \infty$. We consider the closed
convex cone of monotone transport maps
$$
  \COPT\SUB{\RHO} := \Big\{ \BF \in \L^2(\R^d,\RHO) \colon
    \text{$\BF$ is monotone} \Big\}.
$$
Here we call any Borel map $\BF \colon \R^d \longrightarrow \R^d$ monotone if
the support of the induced transport plan $\gamma_\BF := (\ID,\BF)\#\RHO$, which
is a Borel probability measure on the product space $\R^d\times\R^d$, is a
monotone set. Recall that $\Gamma \subset \R^d\times\R^d$ is monotone if
$$
  (y_1-y_2)\cdot(x_1-x_2) \GS 0
  \quad\text{for all $(x_i,y_i)\in\Gamma$ with $i=1..2$,}
$$
where $\cdot$ denotes the Euclidean inner product on $\R^d$. Our goal is to find
a representation of elements of the polar cone $\COPT\SUB{\RHO}^\perp$ (at the
zero map), defined as
$$
  \COPT\SUB{\RHO}^\perp := \Big\{ \BG\in\L^2(\R^d,\RHO) \colon
    \int_{\R^d} \BG(x)\cdot\BF'(x) \,\RHO(dx) \LS 0
    \quad\text{for all $\BF'\in\COPT\SUB{\RHO}$} \Big\}.
$$
Notice that since $\RHO$ has finite second moments, any smooth monotone function
with at most linear growth at infinity (see details below) is an element of
$\COPT\SUB{\RHO}$. Moreover, whenever $\BG\in\COPT\SUB{\RHO}^\perp$ is given,
then the product $\BG\RHO$ is in fact an $\R^d$-valued finite Borel measure,
because of Cauchy-Schwarz inequality. We will show below in
Theorem~\ref{T:STRESS} that for any $\BG \in \COPT\SUB{\RHO}^\perp$ the measure
$\BG\RHO$ can be written as the divergence of a finite Borel measure taking
values in the symmetric, positive semidefinite matrices. In the one-dimensional
case, we therefore obtain the derivative of a nonnegative function (measure) as
in \cite{NatileSavare}. Our proof relies on an application of the Hahn-Banach
theorem and is inspired by a similar argument in \cite{BouchitteGangboSeppecher}
for the construction of Michell trusses. It is possible to prove a
representation of the polar cone $\COPT\SUB{\RHO}^\perp$ similar to ours by
using a characterization of polar cones from \cite{Zarantonello} and subharmonic
functions; see \cites{Lions, Westdickenberg} for instance. Compared to these
presentations, our proof is shorter and simpler.


\section{The Main Result}
\label{S:TMR}

We will denote by $x\cdot y$ the Euclidean inner product of $x,y\in\R^k$, and by
$|x|$ the induced norm. We write $\R^{l\times l}$ for the space of real
matrices. For any $A,B \in \R^{l\times l}$ with components $A=(a_{ij})$ and
$B=(b_{ij})$ we define an inner product
$$
  \langle A,B\rangle := \TRACE(AB^\T) = \sum_{i,j=1}^l a_{i,j} b_{i,j}
$$
(with $B^\T$ the transpose of $B$), which induces the Frobenius norm
$$
  \|A\| := \sqrt{\TRACE(AA^\T)} = \sum_{i,j=1}^l a_{i,j}^2.
$$
We denote by $\S^l$ the space of symmetric real matrices and by $\S^l_+$ the
subset of positive semidefinite symmetric matrices. The space of all positive
definite, but not necessarily symmetric matrices will be denoted by $\R^{l\times
l}_+$. Recall that
$$
  A\in\R^{l\times l}_+
  \quad\Longleftrightarrow\quad
  v\cdot(Av) \GS 0
  \quad\text{for all $v\in\R^l$.}
$$
Equivalently, we have $A\in\R^{l\times l}_+$ if and only if $A^\SYM :=
(A+A^\T)/2 \in \S^l_+$.

Let $\C_*(\R^d;\R^{l\times l})$ be the space of all continuous functions $w
\colon \R^d \longrightarrow \R^{l\times l}$ with the property that
$\lim_{|x|\rightarrow\infty} w(x) \in \R^{l\times l}$ exists. Note that we can
write
$$
  \C_*(\R^d;\R^{l\times l}) = \R^{l\times l} + \C_0(\R^d;\R^{l\times l}),
$$
where $\C_0(\R^d;\R^{l\times l})$ is the closure of the space of all compactly
supported continuous $\R^{l\times l}$-valued maps, w.r.t.\ the $\sup$-norm. In
an analogous way, we define $\C_*(\R^d; \S^l)$ and $\C_*(\R^d; \S^l_+)$. For any
map $u\in\C^1(\R^d;\R^d)$ we denote by
$$
  e(u(x)) := Du(x)^\SYM 
  \quad\text{for all $x\in\R^d$}
$$
its deformation tensor, which is an element of $\C(\R^d;\S^d)$. Let
\begin{gather*}
  \C^1_*(\R^d;\R^d) := \{ u\in\C^1(\R^d;\R^d) \colon
    \text{$Du \in \C_*(\R^d;\R^{d\times d})$} \},
\\
  \MON(\R^d) := \{ u\in\C^1_*(\R^d;\R^d) \colon
    \text{$u$ is monotone} \},
\end{gather*}
so that $e(u) \in \C_*(\R^d;\S^d_+)$ if $u\in\MON(\R^d)$. The cone $\MON(\R^d)$
contains all linear maps $u(x):=Ax$ for $x\in\R^d$, where $A\in\R^{d\times
d}_+$. See \cite{AlbertiAmbrosio} for more details.

We will denote by $\M(\R^d; \R^k)$ the space of finite $\R^k$-valued Borel
measures. In an analogous way, we define $\M(\R^d;\S^l)$ and $\M(\R^d;\S^l_+)$.
If $f_i$, $i=1\ldots k$, are the components of $\F\in\M(\R^d;\R^k)$ and
$u\in\C_b(\R^d;\R^k)$ we write
$$
  \int_{\R^d} u(x)\cdot\F(dx) = \sum_{i=1}^k u_i(x)\,f_i(dx).
$$
We will say that $\F$ has finite first moment if $\sum_{i=1}^k \int_{\R^d} |x|
\,|f_i|(dx) < \infty$. If $\mu_{i,j} = \mu_{j,i}$, $i,j=1\ldots l$, are the
components of $\MU \in \M(\R^d;\S^l)$ and $v\in\C_b(\R^d;\S^l)$, then
$$
  \int_{\R^d} \langle v(x),\MU(dx) \rangle
    = \sum_{i,j=1}^l v_{i,j}(x) \,\mu_{i,j}(dx).
$$
For any $\MU=(\mu_{i,j}) \in \M(\R^d;\S^l)$ we have $\MU \in \M(\R^d;\S^l_+)$ if
and only if
$$
  \sum_{i,j=1}^l \mu_{i,j} v_i v_j
  \quad\text{is a positive measure for all $v\in\R^l$.}
$$

We can now state our representation result.

\begin{theorem}[Stress Tensor] 
\label{T:STRESS} Assume that there exist a measure $\F \in \M(\R^d;\R^d)$ with
finite first moment and a matrix-valued field $\ETA \in \M(\R^d;\S^d_+)$ with
\begin{equation}
\label{E:INEQ}
  G(u) := -\int_{\R^d} u(x) \cdot \F(dx) 
    - \int_{\R^d} \langle e(u(x)), \ETA(dx) \rangle \GS 0
\end{equation}
for all $u \in \MON(\R^d)$. Then there exists $\MU \in \M(\R^d;\S^d_+)$ such
that
\begin{align}
  G(u) &= \int_{\R^d} \langle e(u(x)), \MU(dx) \rangle
  \quad\text{for all $u \in \D(\R^d;\R^d)$,} 
\label{E:REPR}\\
  \int_{\R^d} \TRACE(\MU(dx)) 
    &\LS 
      -\int_{\R^d} x\cdot\F(dx) - \int_{\R^d} \TRACE(\ETA(dx)).
\label{E:CONTROL}
\end{align}
\end{theorem}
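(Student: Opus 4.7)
My plan is to apply the analytic form of the Hahn-Banach theorem on the Banach space $V := \C_*(\R^d;\S^d)$ (with sup norm) and then invoke the Riesz representation theorem. I define the sublinear functional
\[ p(w) := \inf\bigl\{ G(u) : u \in \MON(\R^d),\ e(u) - w \in \C_*(\R^d;\S^d_+) \bigr\}, \qquad w \in V. \]
Sublinearity follows from the fact that $\MON(\R^d)$ is a convex cone and $G$ is linear on $\C^1_*(\R^d;\R^d)$. Bounding the eigenvalues of the symmetric matrix $w(x)$ by its Frobenius norm and testing with $u(x) := \|w\|_\infty\,x \in \MON(\R^d)$ gives $p(w) \LS \|w\|_\infty\,G(\ID)$. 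Hypothesis \eqref{E:INEQ} yields $p \GS 0$, and in particular $G(\ID) \GS 0$.

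The crucial preliminary step is to prescribe the value of the eventual extension on the constant matrix $I$. I set $L_0(\alpha I) := \alpha G(\ID)$ on $\R I \subset V$ and check $L_0 \LS p$: for $\alpha \GS 0$ and $u \in \MON(\R^d)$ with $e(u) \GS \alpha I$, the map $v := u - \alpha\,\ID$ lies in $\MON(\R^d)$ because $e(v) = e(u) - \alpha I \in \C_*(\R^d;\S^d_+)$, so \eqref{E:INEQ} and linearity of $G$ give $G(u) = G(v) + \alpha G(\ID) \GS \alpha G(\ID)$, whence $p(\alpha I) \GS \alpha G(\ID)$; the case $\alpha \LS 0$ is immediate from $p \GS 0$ and $G(\ID) \GS 0$. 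Hahn-Banach then produces a linear $L : V \to \R$ extending $L_0$ with $L \LS p$, so in particular $L(I) = G(\ID)$. For $w \in \C_*(\R^d;\S^d_+)$ the choice $u = 0$ shows $p(-w) \LS 0$, hence $L(w) \GS 0$; together with $|L(w)| \LS \|w\|_\infty\,G(\ID)$, this makes $L|_{\C_0(\R^d;\S^d)}$ a continuous positive functional, and Riesz representation yields $\MU \in \M(\R^d;\S^d_+)$ with $L(w) = \int_{\R^d} \langle w(x), \MU(dx)\rangle$ for all $w \in \C_0(\R^d;\S^d)$.

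To establish \eqref{E:REPR}, fix $u \in \D(\R^d;\R^d)$ and choose $c > 0$ so large that both $u + c\,\ID$ and $-u + c\,\ID$ lie in $\MON(\R^d)$ (possible because $Du$ is compactly supported). Then $L \LS p$ applied to the sections $e(\pm u + c\,\ID) = \pm e(u) + cI$, together with $L(I) = G(\ID)$ and linearity of $G$, gives $\pm L(e(u)) \LS \pm G(u)$, hence $L(e(u)) = G(u)$; since $e(u) \in \C_0(\R^d;\S^d)$ this is exactly \eqref{E:REPR}. For \eqref{E:CONTROL} I pick cutoffs $\chi_R \in \D(\R^d)$ with $0 \LS \chi_R \LS 1$ and $\chi_R \nearrow 1$: testing with $u = \ID$ yields $p(\chi_R I) \LS G(\ID)$, so
\[ \int_{\R^d} \chi_R(x)\,\TRACE(\MU(dx)) = L(\chi_R I) \LS G(\ID), \]
and monotone convergence as $R \to \infty$ produces \eqref{E:CONTROL}. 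The main delicate point — and the reason for the preliminary extension to $\R I$ — is forcing $L(I) = G(\ID)$: a generic Hahn-Banach extension would give only $L(I) \LS G(\ID)$, producing two-sided inequalities instead of the equality needed in \eqref{E:REPR}.
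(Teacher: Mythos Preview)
Your argument is correct and follows essentially the same strategy as the paper: a Hahn--Banach extension on $\C_*(\R^d;\S^d)$ exploiting that the constant section $I$ is interior to the cone $\C_*(\R^d;\S^d_+)$, followed by Riesz representation and a test against $u=\ID$ for the trace bound. The packaging differs slightly: the paper invokes Riedl's positive-extension theorem from the full subspace $L=\{e(u):u\in\C^1_*(\R^d;\R^d)\}$ (which forces the rigidity argument to check well-definedness of $F_0$) and then applies Riesz on the one-point compactification $\beta\R^d$, whereas you build the sublinear $p$ directly, start from the one-dimensional subspace $\R I$ to pin down $L(I)=G(\ID)$, and apply Riesz on $\C_0(\R^d;\S^d)$ with a cutoff argument for \eqref{E:CONTROL}. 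Your route sidesteps the well-definedness check at the cost of having to recover $L(e(u))=G(u)$ for $u\in\D(\R^d;\R^d)$ by hand via the $\pm u + c\,\ID$ trick; the two are equivalent reformulations of the same idea.
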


Notice that the integrals in \eqref{E:INEQ} are finite for any choice of $u\in
\C^1_*(\R^d;\R^d)$, by our assumptions on $\F$ and $\ETA$. Recall that the
trace of a symmetric matrix is equal to the sum of its eigenvalues, which in the
case of a positive semidefinite matrix are all nonnegative. Therefore
\eqref{E:CONTROL} controls the size of the measure $\MU$.

For $\ETA\equiv 0$ we obtain the representation announced in the introduction: 
$$
  \int_{\R^d} u(x)\cdot\F(dx) = -\int_{\R^d} \langle Du(x), \MU(dx) \rangle
$$
for all test functions $u$. Recall that $\MU$ takes values in the symmetric
matrices. The more general form of \eqref{E:INEQ} is motivated by a variational
time discretization for the compressible Euler equations, for which a
minimization problem of the form
\begin{equation}
\label{E:MIMM}
  \inf_{\BF\in\COPT\SUB{\RHO}} \bigg\{ 
    \frac{1}{2} \int_{\R^d} |\BH(x)-\BF(x)|^2 \,\RHO(dx) 
      + \int_{\R^d} e(x) \det(D\BF(x)^\SYM)^{1-\gamma} \,dx \bigg\}
\end{equation}
for suitable $\BH\in\L^2(\R^d,\RHO)$ and nonnegative $e\in\L^1(\R^d)$ must be
solved, with $\gamma > 1$ some constant. Denoting by $\BF\in\COPT\SUB{\RHO}$ the
minimizer of \eqref{E:MIMM} and letting $\BG := \BH-\BF$, we can write the
corresponding first-order optimality condition (formally) as
\begin{align*}
  & -\int_{\R^d} \BG(x)\cdot\BF'(x) \,\RHO(dx)
\\
  & \qquad
    - (\gamma-1) \int_{\R^d} e(x) \det(D\BF(x)^\SYM)^{-\gamma}
        \; \TRACE\Big( \COF(D\BF(x)^\SYM)^\T D\BF'(x) \Big) \,dx 
      \GS 0
\end{align*}
for all $\BF'\in\COPT\SUB{\RHO}$. From this, assumption \eqref{E:INEQ} follows if
we define
$$
  \F := \BG\RHO
  \quad\text{and}\quad
  \ETA := (\gamma-1) e \det(D\BF^\SYM)^{-\gamma} \COF(D\BF^\SYM)^\T.
$$
One can then check that $\F$ has finite first moments and that $\ETA \in
\M(\R^d;\S^d_+)$. This application will be discussed in more detail in an
upcoming publication.


\subsection{Positive Functionals}
\label{SS:PF}

In this section, we will discuss a general result about extensions of positive
functionals, which is due to Riedl \cite{Riedl}. Let us start with some
notation: In the following, we denote by $E$ a normed vector space. We call {\em
positive cone} any subset $C\subset E$ with $C\neq E$ with the following
properties:
\begin{equation}
\label{E:POSCONE}
  C+C \subset C,
  \qquad
  \lambda C \subset C
  \quad\text{for all $\lambda>0$,}
  \qquad
  C\cap (-C) = \{ 0 \}.
\end{equation}
The positive cone $C$ induces a partial ordering $\GS$ on the space $E$ by
$$
  y \GS x 
  \quad\Longleftrightarrow\quad
  y-x\in C.
$$
A linear map $F \colon L \longrightarrow \R$ defined on a subspace $L\subset E$
is called {\em positive} if
\begin{equation}
\label{E:POSITIVE}
  F(x) \GS 0 
  \quad\text{for all $x\in L\cap C$.}
\end{equation}
A linear map $F \colon E \longrightarrow \R$ is called {\em functional} if it is
continuous.

\begin{proposition}
\label{P:RIEDL} Let $E$ be a Banach space, partially ordered by a positive cone
$C$. If some subspace $L \subset E$ contains an interior point of $C$, then
every positive linear map $F_0 \colon L \longrightarrow \R$ can be extended to a
positive functional $F \colon E \longrightarrow \R$.
\end{proposition}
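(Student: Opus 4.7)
The plan is to build a sublinear gauge $q$ on $E$ from the cone $C$ and the given interior point $x_0\in L$, and then apply the Hahn--Banach theorem to produce a linear extension of $F_0$ dominated by $F_0(x_0)\,q$; the choice of $q$ will force the extension to be both continuous and positive in one stroke.

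Fix $r>0$ with $B(x_0,r)\subset C$ and define
\[
  q(x) := \inf\bigl\{\, t>0 \colon tx_0 - x \in C \,\bigr\},
  \quad x\in E.
\]
For any $x\in E$ and any $t>\|x\|/r$ one has $x_0 - x/t \in B(x_0,r)\subset C$, and the cone scaling in \eqref{E:POSCONE} then gives $tx_0 - x \in C$; consequently $q(x)\leq \|x\|/r<\infty$. The relations $C+C\subset C$ and $\lambda C\subset C$ yield subadditivity and positive homogeneity of $q$, so $q$ is a sublinear functional on $E$.

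Next, observe that $x_0\in L\cap C$ (interior points of $C$ lie in $C$), so $F_0(x_0)\geq 0$ by positivity of $F_0$. For $x\in L$ and any $t>0$ with $tx_0-x\in C$, the element $tx_0-x$ lies in $L\cap C$, whence $F_0(tx_0-x)\geq 0$, i.e., $F_0(x)\leq t F_0(x_0)$. Taking the infimum over admissible $t$ yields
\[
  F_0(x) \leq F_0(x_0)\,q(x) =: p(x)
  \quad\text{for all } x\in L.
\]
The Hahn--Banach theorem then provides a linear extension $F\colon E\to \R$ of $F_0$ satisfying $F\leq p$ on $E$.

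Finally, I would verify the two required properties of $F$. For continuity, combining $F(x)\leq p(x)\leq (F_0(x_0)/r)\|x\|$ with the same bound applied to $-x$ (using linearity of $F$) gives $|F(x)|\leq (F_0(x_0)/r)\|x\|$. For positivity, given $x\in C$ the relation $tx_0+x\in C+C\subset C$ for every $t>0$ shows $q(-x)=0$, so $F(-x)\leq p(-x)=0$, i.e., $F(x)\geq 0$. The degenerate case $F_0(x_0)=0$ is handled separately: applying positivity to $x_0\pm x$ for $x\in L$ with $\|x\|<r$ forces $F_0\equiv 0$ on a neighborhood of the origin in $L$, hence on all of $L$ by linearity, and the zero functional is then the required extension. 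The main obstacle, which this particular $q$ handles cleanly, is engineering a single sublinear majorant that simultaneously controls the norm (yielding continuity) and vanishes on $-C$ (yielding positivity); everything else is bookkeeping with the cone axioms.
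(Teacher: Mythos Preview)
Your proof is correct. Both you and the paper apply Hahn--Banach with a sublinear majorant and then verify positivity and continuity, but the choice of majorant differs. The paper sets
\[
  p(x) := \inf\{\, F_0(y) : y\in L,\ y\geq x \,\},
\]
which is the canonical choice (essentially the smallest sublinear functional dominating $F_0$ and compatible with the order); it then establishes continuity in a separate step using the interior point. You instead use the Minkowski-type gauge $q(x)=\inf\{t>0: tx_0-x\in C\}$ of the translated cone $x_0-C$ and take $p=F_0(x_0)\,q$. This is precisely the paper's $p$ with the infimum restricted to the ray $y=tx_0$, so your $p$ is larger, but it has the advantage that the bound $q(x)\leq \|x\|/r$ delivers continuity for free with the explicit operator norm $F_0(x_0)/r$. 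Your handling of the degenerate case $F_0(x_0)=0$ is also clean (and in fact already follows from your inequality $F_0(x)\leq tF_0(x_0)$ applied to $\pm x$). Your approach is slightly more concrete and economical; the paper's is more canonical and would adapt more readily to settings where only an order unit rather than a norm-interior point is available. Both are standard variants of the Krein--Riedl extension argument.
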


\begin{proof}
See Theorem~10.10 of \cite{Riedl}. We include a proof for the reader's
convenience. 
\medskip

{\bf Step~1.} We first observe that $E=L-C$. Indeed if $x_0\in L$ is an inner
point of $C$, then there exists a $\delta>0$ with $B_\delta(x_0) \subset C$.
Moreover, for all $x\in E$ there exists $\lambda>0$ (choose $\lambda :=
2\|x\|/\delta$, for example) with the property that
$$
  x/\lambda \subset B_\delta(0) 
    = x_0-B_\delta(x_0)
    \subset x_0-C.
$$
Since $L$ is a subspace we obtain, using $\lambda C \subset C$ for all
$\lambda>0$, that
$$
  E \subset \bigcup_{\lambda>0} \lambda(x_0-C) 
    \subset \R x_0 -C
    \subset L-C.
$$

{\bf Step~2.} Since $E=L-C$, for every $x\in E$ there exist $y_\pm\in L$ and
$z_\pm \in C$ such that $\pm x=y_\pm-z_\pm$, which implies that $y_+\GS x \GS
-y_-$. We now define
\begin{equation}
\label{E:PX}
  p(x) := \inf \Big\{ F_0(y) \colon y\in L, y\GS x \Big\}
  \quad\text{for all $x\in E$.}
\end{equation}
Then $p(x) \LS F_0(y_+) < \infty$. On the other hand, for every $y\in L$ with
$y\GS x$ we have $y\GS-y_-$. Since $y+y_- \in L\cap C$, we have $F_0(y+y_-)\GS
0$, by positivity of $F_0$. This implies that $F_0(y) \GS -F_0(y_-) > -\infty$.
We conclude that $p(x)$ is finite for all $x\in E$. It is easy to check that for
all $x_1,x_2\in E$ and for all $\lambda>0$ we have
$$
  p(x_1+x_2) \LS p(x_1) + p(x_2), 
  \qquad 
  p(\lambda x_1) = \lambda p(x_1).
$$
For every $x\in L$ and $z\in E$ with $z\GS x$, we have $F_0(x) \LS p(z)$ (in
particular, we may choose $z=x$). Indeed for every $y\in L$ with $y\GS z$, we
have $y\GS x$, thus $y-x \in L\cap C$. Hence $F_0(y-x)\GS 0$, by positivity,
which yields $F_0(y) = F_0(x) + F_0(y-x) \GS F_0(x)$. Taking the $\inf$ over all
$y\in L$ with $y\GS z$, we obtain the estimate. 
\medskip

{\bf Step~3.} We can now apply the Hahn-Banach theorem and obtain a linear map $F
\colon E\longrightarrow \R$ with $F(x) \LS p(x)$ for all $x\in E$. In order to
show that $F$ is positive, let $x\in C$. Then $0\GS -x$ and $0\in L$, so we may
choose $y=0$ in the definition of $p(-x)$ (see \eqref{E:PX}) to obtain $p(-x)
\LS 0$. Therefore $F(-x) \LS p(-x) \LS 0$, and so $F(x) \GS 0$ for all $x\in C$.
To prove that $F$ is an extension of $F_0$, let $x\in L$. Then we may choose
$y=-x$ in \eqref{E:PX} to obtain $p(-x) \LS F_0(-x)$ for all $x\in L$. Then
$$
  -F(x) = F(-x) \LS p(-x) \LS F_0(-x) = -F_0(x),
$$
hence $F_0(x) \LS F(x)$. Applying the same argument to $-x \in L$, we get
$F_0(x) \GS F(x)$. It follows that $F_0(x) = F(x)$ for all $x\in L$. Therefore
$F$ is an extension of $F_0$. 
\medskip

{\bf Step~4.} It remains to prove that $F$ is continuous. Let $x_0$ be the
interior point of $C$ from Step~1, for which $B_\delta(x_0) \subset C$. Then
$B_\delta(0) \subset \pm(x_0-C)$. Let $\lambda := F(x_0) \GS 0$ (recall that
$F(x)\GS 0$ for all $x\in C$). Then for all $x\in B_\delta(0)$ we have $x_0-x\in
C$, thus $F(x_0-x)\GS 0$. It follows that $F(x_0) \GS F(x)$. Similarly, we
obtain $F(x) \GS -F(x_0)$. Then either $F$ vanishes (if $\lambda=0$), or the
preimage of the nonempty interval $(-\lambda,\lambda)$ contains a neighborhood
of $0$, and so $F$ (being linear) is continuous.
\end{proof}


\subsection{Proof of Theorem~\ref{T:STRESS}}
\label{SS:POTR}

We apply Proposition~\ref{P:RIEDL} with
$$
  E := \C_*(\R^d;\S^d),
  \quad
  C := \C_*(\R^d,\S^d_{+}),
  \quad  
  L := \{ e(u) \colon u\in\C^1_*(\R^d;\R^d) \}.
$$
Clearly $C$ satisfies conditions \eqref{E:POSCONE}. The identity map $\ID$ is an
element of $\MON(\R^d)$, with constant deformation tensor $e(\ID)$ equal to the
identity matrix $\boldsymbol{1} \in \S^d_+$. Since the eigenvalues of a
symmetric matrix depend continuously on the matrix entries, we have that $e(\ID)
= \boldsymbol{1}$ is an interior point of $C$: For all $\|v-\ID\|_E$
sufficiently small, the eigenvalues of $v(x)$ are bigger than $1/2$ for all
$x\in\R^d$ and $v\in E$.

On the subspace $L \subset E$, we define the functional $F_0$ as
$$
  F_0(v) := -\int_{\R^d} u(x) \cdot \F(dx) 
    - \int_{\R^d} \langle v(x), \ETA(dx) \rangle
  \quad\text{where $v=e(u)$.}
$$
Note that $F_0$ is well-defined: If there exists another map
$\tilde{u}\in\C^1_*(\R^d;\R^d)$ such that $e(\tilde{u}(x))=v(x)$ for all
$x\in\R^d$, then we have $e(u-\tilde{u})\equiv 0$, by linearity. Consequently,
there exist an antisymmetric matrix $B\in\R^{d\times d}$ and $c\in\R^d$ such
that
$$
  \bar{u}(x) := u(x)-\tilde{u}(x) = Bx+c
  \quad\text{for all $x\in\R^d$.}
$$
Indeed assume that $e(\bar{u}(x)) = 0$ and define
$$
  W\bar{u}(x) : = \frac{D\bar{u}(x)-D\bar{u}(x)^\T}{2}
  \quad\text{for all $x\in\R^d$.}
$$
Then $\partial_{k} (W\bar{u})_{i,j} \equiv 0$ for all indices $i,j,k$. Since
$D\bar{u} = e(\bar{u}) + W\bar{u}$ it follows that $D\bar{u}$ is a constant
matrix with vanishing symmetric part, so $\bar{u}$ is a rigid deformation. We
now observe that both $\pm\bar{u}\in\MON(\R^d)$, which implies $F_0(e(\bar{u}))
= 0$ because of \eqref{E:INEQ}. As $F_0$ is linear, we conclude that $F_0$ is
well-defined. Similarly, one can check that $F_0(v) \GS 0$ for all $v\in L\cap
C$, so the linear map $F_0 \colon L \longrightarrow \R$ is positive.

Applying Proposition~\ref{P:RIEDL}, we obtain that $F_0$ can be extended to a
continuous linear map $F \colon E \longrightarrow \R$. Notice that $\C_*(\R^d;
\R)$ is a separable and {\em closed subalgebra} of the space $\C_b(\R^d;\R)$
of bounded continuous $\S^d$-functions. As is well-known, to any closed
subalgebra of a space of bounded continuous functions, there corresponds a
compactification of the domain. In our case, we obtain the one-point (also
called Alexandroff) compactification of $\R^d$, which we will denote by
$\beta\R^d$. Then $\C_*(\R^d; \S^d)$ is isomorphic to $\C(\beta\R^d; \S^d)$. We
refer the reader to \cite{Folland} Section~4.8 for more details. By the Riesz
representation theorem, there therefore exists a finite Radon measure $\MU \in
\M(\beta\R^d; \S^d)$ that represents the functional $F$ in the sense that
$$
  F(v) = \int_{\beta\R^d} \langle v(x),\MU(dx) \rangle
  \quad\text{for all $v\in\C_*(\R^d;\S^d)$.}
$$
Since $F(v)\GS 0$ for all $v\in\C_*(\R^d;\S^d_+)$ we obtain that $\MU$ takes in
fact values in $\S^d_+$. Moreover, as $F$ is an extension of $F_0$, the
following identity holds:
$$
  F_0(v) = -\int_{\R^d} u(x) \cdot \F(dx) 
      - \int_{\R^d} \langle v(x), \ETA(dx) \rangle
    = \int_{\beta\R^d} \langle v(x), \MU(dx) \rangle
$$
for any $v=e(u)$ and $u\in\C^1_*(\R^d;\R^d)$; see \eqref{E:REPR}. In particular,
we may choose $u=\ID$ (with $e(\ID)=\boldsymbol{1})$ to obtain the control
(recall that $\MU$ is $\S^d_+$-valued)
$$
  \int_{\beta\R^d} \TRACE(\MU(dx)) 
    = -\int_{\R^d} x\cdot\F(dx) - \int_{\R^d} \TRACE(\ETA(dx)).
$$
Restricting the representation from $\beta\R^d$ to $\R^d$, we obtain the result.


\begin{bibdiv} 
\begin{biblist}

\bib{AlbertiAmbrosio}{article}{ 
    AUTHOR = {Alberti, G.}, 
    AUTHOR = {Ambrosio, L.},
     TITLE = {A geometrical approach to monotone functions in 
              {${\bf R}\sp n$}},
   JOURNAL = {Math. Z.},
    VOLUME = {230}, 
      YEAR = {1999}, 
    NUMBER = {2}, 
     PAGES = {259--316}, 
}

\bib{BouchitteGangboSeppecher}{article}{
    AUTHOR = {Bouchitt{\'e}, G.},
    AUTHOR = {Gangbo, W.},
    AUTHOR = {Seppecher, P.},
     TITLE = {Michell trusses and lines of principal action},
   JOURNAL = {Math. Models Methods Appl. Sci.},
    VOLUME = {18},
      YEAR = {2008},
    NUMBER = {9},
     PAGES = {1571--1603},
}

\bib{Brezis}{book}{
    AUTHOR = {Br{\'e}zis, H.},
     TITLE = {Op\'erateurs maximaux monotones et semi-groupes de
              contractions dans les espaces de {H}ilbert},
      NOTE = {North-Holland Mathematics Studies, No. 5. Notas de
              Matem{\'a}tica (50)},
 PUBLISHER = {North-Holland Publishing Co.},
   ADDRESS = {Amsterdam},
      YEAR = {1973},
}

\bib{CarlierLachandRobert}{article}{
    AUTHOR = {Carlier, G.},
    AUTHOR = {Lachand-Robert, T.},
     TITLE = {Representation of the polar cone of convex functions and
              applications},
   JOURNAL = {J. Convex Anal.},
    VOLUME = {15},
      YEAR = {2008},
    NUMBER = {3},
     PAGES = {535--546},
}

\bib{Folland}{book}{
    AUTHOR = {Folland, G. B.},
     TITLE = {Real analysis},
    SERIES = {Pure and Applied Mathematics (New York)},
      NOTE = {Modern techniques and their applications,
              A Wiley-Interscience Publication},
 PUBLISHER = {John Wiley \& Sons Inc.},
   ADDRESS = {New York},
      YEAR = {1984},
}

\bib{Lions}{article}{
    AUTHOR = {Lions, P.-L.},
     TITLE = {Identification du c\^one dual des fonctions convexes et
              applications},
   JOURNAL = {C. R. Acad. Sci. Paris S\'er. I Math.},
    VOLUME = {326},
      YEAR = {1998},
    NUMBER = {12},
     PAGES = {1385--1390},
}

\bib{NatileSavare}{article}{
    AUTHOR = {Natile, L.},
    AUTHOR = {Savar{\'e}, G.},
     TITLE = {A {W}asserstein approach to the one-dimensional sticky
              particle system},
   JOURNAL = {SIAM J. Math. Anal.},
    VOLUME = {41},
      YEAR = {2009},
    NUMBER = {4},
     PAGES = {1340--1365},
}

\bib{Riedl}{article}{
    AUTHOR = {Riedl, J.},
     TITLE = {Partially ordered locally convex vector spaces and extensions
              of positive continuous linear mappings},
   JOURNAL = {Math. Ann.},
    VOLUME = {157},
      YEAR = {1964},
     PAGES = {95--124},
}
	
\bib{Westdickenberg}{article}{
    AUTHOR = {Westdickenberg, M.},
     TITLE = {Projections onto the cone of optimal transport maps and
              compressible fluid flows},
   JOURNAL = {J. Hyperbolic Differ. Equ.},
    VOLUME = {7},
      YEAR = {2010},
    NUMBER = {4},
     PAGES = {605--649},
}
	
\bib{Zarantonello}{article}{ 
    AUTHOR = {Zarantonello, E. H.}, 
     TITLE = {Projections on convex sets in Hilbert space and spectral 
              theory. I. Projections on convex sets}, 
CONFERENCE = { 
     TITLE = {Contributions to nonlinear functional analysis (Proc. Sympos.,
              Math. Res. Center, Madison, WI, 1971)}, 
             }, 
      BOOK = {
 PUBLISHER = {Academic Press}, 
     PLACE = {New York}, 
             }, 
      DATE = {1971}, 
     PAGES = {237--341}, 
}

\end{biblist} 
\end{bibdiv}

\end{document}